\def\tr{{\rm tr}}
\begin{document}

\newtheorem{problem}{Problem}

\newtheorem{theorem}{Theorem}[section]
\newtheorem{corollary}[theorem]{Corollary}
\newtheorem{definition}[theorem]{Definition}
\newtheorem{conjecture}[theorem]{Conjecture}
\newtheorem{question}[theorem]{Question}
\newtheorem{lemma}[theorem]{Lemma}
\newtheorem{proposition}[theorem]{Proposition}
\newtheorem{quest}[theorem]{Question}
\newtheorem{example}[theorem]{Example}

\newenvironment{proof}{\noindent {\bf
Proof.}}{\rule{2mm}{2mm}\par\medskip}

\newenvironment{proofof3}{\noindent {\bf
Proof of  Theorem 1.2.}}{\rule{2mm}{2mm}\par\medskip}

\newenvironment{proofof5}{\noindent {\bf
Proof of  Theorem 1.3.}}{\rule{2mm}{2mm}\par\medskip}

\newcommand{\remark}{\medskip\par\noindent {\bf Remark.~~}}
\newcommand{\pp}{{\it p.}}
\newcommand{\de}{\em}

\title{  {Another  determinantal inequality involving partial traces }
 \thanks{This work was supported by  NSFC (Grant Nos. 11671402, 11871479),  
Hunan Provincial Natural Science Foundation (Grant Nos. 2016JJ2138, 2018JJ2479) 
and  Mathematics and Interdisciplinary Sciences Project of CSU. 
 E-mail addresses: ytli0921@hnu.edu.cn(Y. Li), 
fenglh@163.com (L. Feng), 
wjliu6210@126.com (W. Liu), 
FairyHuang@csu.edu.cn (Y. Huang, corresponding author). }}

\author{Yongtao Li$^a$, Lihua Feng$^b$, Weijun Liu$^b$, Yang Huang$^{\dag, b}$\\
{\small ${}^a$School of Mathematics, Hunan University} \\
{\small Changsha, Hunan, 410082, P.R. China } \\
{\small $^b$School of Mathematics and Statistics, Central South University} \\
{\small New Campus, Changsha, Hunan, 410083, P.R. China. } }

\maketitle

\vspace{-0.5cm}

\begin{abstract}
Let $A$ be a positive semidefinite $m\times m$ block matrix with each block $n$-square, 
then the following determinantal inequality for partial traces holds 
\[ 	(\tr A)^{mn} - \det(\tr_2 A)^n \ge 
  \bigl|   \det A - \det(\tr_1 A)^m \bigr|, \]
where $\tr_1$ and $\tr_2$ stand for the first and second partial trace, respectively. 
This result improves a recent result of Lin \cite{Lin16}.  
 \end{abstract}

{{\bf Key words:}  
Partial traces; Block matrices; Determinantal inequality; 
Numerical range in a sector.  } \\
{2010 Mathematics Subject Classication.  15A45, 15A60, 47B65.}

\section{Introduction}

\label{sec1} 

Throughout the paper, we use the following standard notation. 
The set of $n\times n$ complex matrices is denoted by $\mathbb{M}_n(\mathbb{C})$, 
or simply by $\mathbb{M}_n$, 
and the identity matrix of order $n$ by  $I_n$, or $I$ for short. 
If $A=[a_{ij}]$ is of order $m\times n$ and  
 $B$ is of order $s\times t$, the tensor product of $A$ with $B$, 
denoted by $A\otimes B$, is an $ms\times nt$ matrix, 
partitioned into $m\times n$ block matrix with the $(i,j)$-block the $s\times t$ matrix $a_{ij}B$.
In this paper, 
we are interested in complex block matrices. Let $\mathbb{M}_m(\mathbb{M}_n)$ 
be the set of complex matrices partitioned into $m\times m$ blocks 
with each block being a $n\times n$ matrix. 
The element of $\mathbb{M}_m(\mathbb{M}_n)$ is usually written as ${ A}=[A_{i,j}]_{i,j=1}^m$, 
where $A_{i,j}\in \mathbb{M}_n$ for all $i,j$. 
By convention, if $X\in \mathbb{M}_n$ is positive semidefinite, we write $X\ge 0$. 
For two Hermitian matrices $A$ and $B$ of the same size, $A\ge B$ means $A-B\ge 0$.  
It is easy to see that $\ge$ is a partial ordering on the set of Hermitian matrices, 
referred to as {\it L\"{o}uner ordering}.

Now we introduce the definition of partial traces, 
which comes from Quantum Information Theory \cite[p. 12]{Petz08}.  
For $A\in \mathbb{M}_m(\mathbb{M}_n)$, 
the first partial trace (map) $A \mapsto \mathrm{tr}_1 A \in \mathbb{M}_n$ is defined as the  
adjoint map of the imbedding map $X \mapsto I_m\otimes X\in \mathbb{M}_m\otimes \mathbb{M}_n$. 
Correspondingly, the second partial trace (map)  $A \mapsto \mathrm{tr}_2 A\in \mathbb{M}_m$ is 
defined as the adjoint map of the imbedding map 
$Y\mapsto Y\otimes I_n \in \mathbb{M}_m\otimes \mathbb{M}_n$. Therefore, we have
\begin{equation*} \label{eqdef} 
\langle I_m\otimes X, A \rangle =\langle X, \mathrm{tr}_1A \rangle ,
\quad \forall X\in \mathbb{M}_n, 
\end{equation*}
and 
\[ \langle Y\otimes I_n, A \rangle =\langle Y,\mathrm{tr}_2 A \rangle, 
\quad \forall Y\in \mathbb{M}_m. \]
Assume that $A=[A_{i,j}]_{i,j=1}^m$ with $A_{i,j}\in \mathbb{M}_n$, 
then the visualized forms of the partial traces 
are actually given in  \cite[pp. 120--123]{Bh07} as
\begin{equation*} \label{eqdef2}
 \mathrm{tr}_1 { A}=\sum\limits_{i=1}^m A_{i,i},\quad 
\mathrm{tr}_2{ A}=\bigl[ \mathrm{tr}A_{i,j}\bigr]_{i,j=1}^m. 
\end{equation*}

If ${ A}=[A_{i,j}]_{i,j=1}^m \in \mathbb{M}_m(\mathbb{M}_n)$ is positive semidefinite,  
it is easy to see that  both $\tr_1 A$ and $\tr_2 A$ 
are positive semidefinite; see, e.g., \cite{Zha12}. 
To some extent, 
these two partial traces are closely related.  
For instance, Audenaert \cite{Aud07} proved an inequality for Schatten $p$-norms,  
\begin{equation} \label{eqaud}
\tr A + \lVert A \lVert_q \ge \lVert \tr_1 A \rVert_q + \lVert \tr_2 A \rVert_q .
\end{equation}
Inequality (\ref{eqaud}) was used to prove the subadditivity of Tsallis entropies. 

Moreover, Ando (see \cite{Ando14}) established  the following,  

\begin{equation} \label{eqando}
(\tr A)I_m\otimes I_n+  A \ge  
I_m\otimes (\mathrm{tr}_1 A)  + (\mathrm{tr}_2 A) \otimes I_n,
\end{equation}
where $\ge $ means the L\"{o}uner ordering. 
Furthermore, 
Motivated by inequalities (\ref{eqaud}) and (\ref{eqando}), 
Lin \cite{Lin16} proved an analogous result for determinant, which states that 
\begin{equation} \label{eqlin}
(\tr A)^{mn} +\det A   \ge   \det (\tr_1 A)^m +\det (\tr_2 A)^n. 
\end{equation}

In this paper, we improve Lin's result (\ref{eqlin}) as follows. 

\begin{theorem} \label{thm34}
 Let $A\in \mathbb{M}_m(\mathbb{M}_n)$ be positive semidefinite. Then
\[ 	(\tr A)^{mn} - \det(\tr_2 A)^n \ge 
  \bigl|  \det A - \det(\tr_1 A)^m \bigr|. \]
\end{theorem}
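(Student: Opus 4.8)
\medskip\par\noindent{\bf Proof plan.~~}
The plan is to remove the absolute value and split the assertion into two ordinary determinantal inequalities, one of which is already at hand. Write $\ell=(\tr A)^{mn}$, $a=\det A$, $b=\det(\tr_1 A)^m$, $c=\det(\tr_2 A)^n$. Since $\tr_2 A\in\mathbb{M}_m$ is positive semidefinite with $\tr(\tr_2 A)=\tr A$, the AM--GM inequality applied to its eigenvalues gives $c=\det(\tr_2 A)^n\le(\tr A/m)^{mn}\le\ell$; in particular $\ell-c\ge 0$, and hence $\ell-c\ge|a-b|$ is equivalent to the conjunction of $\ell+a\ge b+c$ and $\ell+b\ge a+c$. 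The first of these is exactly Lin's inequality (\ref{eqlin}), which I would take as known. Thus everything reduces to proving
\[ (\tr A)^{mn}+\det(\tr_1 A)^m\ge\det A+\det(\tr_2 A)^n . \]

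To establish this I would combine two elementary facts. (i) $\det(\tr_1 A)^m\ge\det A$: since $A\ge 0$, each diagonal block $A_{ii}$ is positive semidefinite, hence $\tr_1 A=\sum_{j=1}^m A_{jj}\succeq A_{ii}$ for every $i$, and monotonicity of the determinant on the positive semidefinite cone gives $\det(\tr_1 A)\ge\det A_{ii}$; multiplying these $m$ inequalities and then invoking Fischer's inequality $\det A\le\prod_{i=1}^m\det A_{ii}$ yields $\det(\tr_1 A)^m\ge\prod_{i=1}^m\det A_{ii}\ge\det A$. (ii) $(\tr A)^{mn}\ge\det(\tr_2 A)^n$, established above. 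Adding (i) and (ii) gives the displayed inequality, and combining it with (\ref{eqlin}) completes the proof.

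The place where I expect the genuine work to sit is fact (i): one has to be sure that the absolute value resolves as $|a-b|=b-a$, so that the ``harder half'' of the theorem really does collapse to (\ref{eqlin}). The tempting one-line argument --- that $A\le I_m\otimes\tr_1 A$, whence $\det A\le\det(I_m\otimes\tr_1 A)=\det(\tr_1 A)^m$ --- is \emph{false} in general; already for a suitable rank-one $A\in\mathbb{M}_2(\mathbb{M}_2)$ the matrix $I_m\otimes\tr_1 A-A$ turns out to be indefinite, so one must route the argument through the diagonal blocks and Fischer's inequality as above. If instead one wanted a single self-contained proof that does not invoke (\ref{eqlin}) as a black box --- the sort of argument that ought also to survive the passage to matrices with numerical range in a sector hinted at by the keywords --- the natural device is Ando's inequality (\ref{eqando}): taking determinants there and using $\det(X+Y)\ge\det X+\det Y$ for positive semidefinite $X,Y$ gives $\det\bigl((\tr A)I_{mn}+A\bigr)\ge\det(\tr_1 A)^m+\det(\tr_2 A)^n$. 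But the left-hand side overshoots $(\tr A)^{mn}+\det A$ --- expanded in the eigenvalues of $A$ it carries all the intermediate elementary symmetric terms --- so extracting exactly (\ref{eqlin}), and not merely this weaker consequence, would be the crux of such an approach.
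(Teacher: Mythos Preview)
Your proof is correct, and it is considerably more elementary than the paper's. Both arguments reduce the theorem to Lin's inequality (\ref{eqlin}) together with the companion inequality
\[ (\tr A)^{mn}+\det(\tr_1 A)^m\ge\det A+\det(\tr_2 A)^n, \]
but they establish the latter by very different means. The paper first proves a L\"owner-order complement of Ando's inequality (Proposition~\ref{thm22}),
\[ (\tr A)I_{mn}+I_m\otimes(\tr_1 A)\ge A+(\tr_2 A)\otimes I_n, \]
via an induction on $m$ that uses the complete copositivity of the map $X\mapsto(\tr X)I+X$ (Lemma~\ref{lem21}), and then passes to determinants through a technical lemma of Lin (Lemma~\ref{lem31}). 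You instead obtain the determinantal inequality directly from two term-by-term comparisons: $\det(\tr_1 A)^m\ge\prod_i\det A_{ii}\ge\det A$ via Fischer, and $(\tr A)^{mn}\ge\det(\tr_2 A)^n$ via AM--GM on eigenvalues. Your route is shorter and avoids the operator-inequality machinery entirely; the paper's route has the compensating benefit that Proposition~\ref{thm22} is a result of independent interest, and the sector extension (Theorem~\ref{thm36}) is phrased to rest on it --- though in fact your argument for the companion inequality applies verbatim to $\Re A$ and would serve that extension equally well. It is also worth noting that your fact~(i) forces $\det A\le\det(\tr_1 A)^m$ unconditionally, so the absolute value in Theorem~\ref{thm34} always opens one way and the theorem collapses to (\ref{eqlin}) itself; your proof makes this transparent, whereas the paper's does not.
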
 

The paper is organized as follows. 
We first present some auxiliary results, 
and then we show our proof of Theorem \ref{thm34}. 
Finally, we extend our result to a larger class of matrices, namely, 
matrices whose numerical ranges are contained in a sector (Theorem \ref{thm36}).

\section{Auxiliary results and proofs}
\label{sec2}

For $A=[A_{i,j}]_{i,j=1}^m\in \mathbb{M}_m(\mathbb{M}_n)$, 
we define the partial tranpose of $A$ by $A^{\tau}=[A_{j,i}]_{i,j=1}^m$. 
It is clear that $A\ge 0$ does not necessarily imply $A^{\tau}\ge 0$. 
If both $A$ and $A^{\tau}$ are positive semidefinite, 
then $A$ is called to be {\it positive partial tranpose} (or PPT for short). 
Recall that a linear map $\Phi: \mathbb{M}_n\to \mathbb{M}_k$ is called positive if it maps positive matrices 
to positive matrices. 
A linear map $\Phi: \mathbb{M}_n\to \mathbb{M}_k$  is said to be $m$-positive if 
for $[A_{i,j}]_{i,j=1}^m\in \mathbb{M}_m(\mathbb{M}_n)$, 
\begin{equation}  \label{eq1}
[A_{i,j}]_{i,j=1}^m \ge 0 \Rightarrow [\Phi (A_{i,j})]_{i,j=1}^m\ge 0. 
\end{equation}
It is said to be {\it completely positive} 
if (\ref{eq1}) holds for any integer $m\ge 1$. 
It is well known that both the trace map and  determinant map 
are  completely positive; see, e.g., \cite[p. 221, p. 237]{Zhang11}. 
On the other hand, a linear map  $\Phi $  is said to be $m$-copositive if 
for $[A_{i,j}]_{i,j=1}^m\in \mathbb{M}_m(\mathbb{M}_n)$, 
\begin{equation}  \label{eq2}
[A_{i,j}]_{i,j=1}^m \ge 0 \Rightarrow [\Phi (A_{j,i})]_{i,j=1}^m\ge 0,  
\end{equation}
and $\Phi$ is said to be {\it completely copositive} 
if (\ref{eq2}) holds for any positive integer $m\ge 1$. 
Furthermore, 
$\Phi$ is called {\it a completely PPT map} if it is completely positive and completely copositive. 
A comprehensive survey of the standard results on completely positive maps can be found in 
\cite[Chapter 3]{Bh07} or \cite{Paulsen02}. 

We need  the following lemma, 
which is the main result in \cite{Lin14}; see, e.g., \cite{Li20laa}.

\begin{lemma} (see \cite{Lin14}) \label{lem21}
The map  $\Phi(X)=(\tr X)I +X$ is a completely PPT map. 
\end{lemma}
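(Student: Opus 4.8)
The plan is to check the two defining conditions separately, and to observe that complete positivity is immediate while complete copositivity carries all the content. For complete positivity, fix any $m$ and any positive semidefinite $A=[A_{i,j}]_{i,j=1}^m\in\mathbb{M}_m(\mathbb{M}_n)$. Applying $\Phi$ to each block yields
\[ [\Phi(A_{i,j})]_{i,j=1}^m=\bigl[(\tr A_{i,j})I_n+A_{i,j}\bigr]_{i,j=1}^m=(\tr_2 A)\otimes I_n+A. \]
Because $A\ge 0$ implies $\tr_2 A\ge 0$, both summands are positive semidefinite, hence so is their sum; as $m$ was arbitrary, $\Phi$ is completely positive.

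For complete copositivity I would pass to Choi matrices. Writing $E_{i,j}$ for the matrix units, recall that a map $\Psi:\mathbb{M}_n\to\mathbb{M}_n$ is completely positive if and only if its Choi matrix $C_\Psi=\sum_{i,j}E_{i,j}\otimes\Psi(E_{i,j})$ is positive semidefinite. One checks directly from the index-swapped condition (\ref{eq2}) that $\Phi$ is completely copositive precisely when $T\circ\Phi$ is completely positive, with $T$ the transpose; this is the one piece of routine bookkeeping in the argument. Since $\Phi(E_{i,j})=\delta_{ij}I_n+E_{i,j}$, I compute
\[ C_{T\circ\Phi}=\sum_{i,j}E_{i,j}\otimes\Phi(E_{i,j})^{T}=\sum_{i}E_{i,i}\otimes I_n+\sum_{i,j}E_{i,j}\otimes E_{j,i}=I_n\otimes I_n+F, \]
where $F:=\sum_{i,j}E_{i,j}\otimes E_{j,i}$.

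The final step is to recognize $F$ as the swap operator on $\mathbb{C}^n\otimes\mathbb{C}^n$. It is Hermitian and satisfies $F^2=I_n\otimes I_n$, so its spectrum is contained in $\{\pm1\}$, whence $C_{T\circ\Phi}=I_n\otimes I_n+F=2P\ge 0$, with $P$ the orthogonal projection onto the symmetric subspace. Thus $T\circ\Phi$ is completely positive, i.e. $\Phi$ is completely copositive, and combined with the first paragraph $\Phi$ is a completely PPT map. The conceptual obstacle worth highlighting is that the identity summand $X\mapsto X$ is itself \emph{not} completely copositive, so copositivity cannot be obtained term by term; it is exactly the cancellation recorded in $I_n\otimes I_n+F\ge 0$ — the positivity of the symmetric projector — that rescues the sum $(\tr X)I+X$.
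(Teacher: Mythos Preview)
The paper does not supply its own proof of this lemma; it is quoted from \cite{Lin14} (see also \cite{Li20laa}) and used as a black box. Your argument is correct and self-contained. Complete positivity is indeed immediate from the identity $[\Phi(A_{i,j})]=(\tr_2 A)\otimes I_n+A$ together with the positivity of $\tr_2 A$. For complete copositivity, the reduction to checking that $T\circ\Phi$ is completely positive is legitimate: for Hermitian $A$ one has $\Phi(A_{j,i})=\Phi(A_{i,j})^*=\overline{\Phi(A_{i,j})^T}$, and entrywise conjugation preserves positive semidefiniteness, so $[\Phi(A_{j,i})]\ge0$ iff $[\Phi(A_{i,j})^T]\ge0$. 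Your Choi-matrix computation $C_{T\circ\Phi}=I_n\otimes I_n+F$ is correct, and the swap operator $F$ is Hermitian with $F^2=I$, hence has spectrum in $\{\pm1\}$, giving $I+F\ge0$. The closing remark that the identity term $X\mapsto X$ alone is not completely copositive, so the result genuinely relies on the cancellation in $I+F$, is a worthwhile observation.
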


In the proof of the next proposition, 
we only employ the fact that $\Psi (X)=(\tr X)I+X$ is $2$-copositive. 
Proposition \ref{thm22}, first proved by the authors \cite{HuangLi20} recently, 
which is a complement of Ando's result (\ref{eqando}) and 
play a vital role in our derivation of Theorem \ref{thm34}. 
We here provide an alternative proof for convenience of readers. 
Our proof is slightly more transparent than the original proof in \cite{HuangLi20}. 

  \begin{proposition}\label{thm22} 
Let $A=[A_{i,j}]_{i,j=1}^m\in \mathbb{M}_m(\mathbb{M}_n)$ be positive semidefinite. Then
  	\begin{equation}\label{eqmain} 
(\tr A)I_m\otimes I_n - (\tr_2 A) \otimes I_n   
\ge  A -I_m\otimes (\tr_1 A) .
  	\end{equation}  
 \end{proposition}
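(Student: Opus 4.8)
The plan is to deduce the operator inequality \eqref{eqmain} from the $2$-copositivity of the map $\Psi(X)=(\tr X)I_n+X$ that was recorded in Lemma \ref{lem21}. First I would set up the right $2\times 2$ block matrix over $\mathbb{M}_n$ to which the $2$-copositivity can be applied. Since $A=[A_{i,j}]_{i,j=1}^m\ge 0$, every principal $2\times 2$ block submatrix
\[
\begin{pmatrix} A_{i,i} & A_{i,j}\\ A_{j,i} & A_{j,j}\end{pmatrix}
\]
is positive semidefinite; applying $2$-copositivity of $\Psi$ to this submatrix yields
\[
\begin{pmatrix} (\tr A_{i,i})I_n+A_{i,i} & (\tr A_{j,i})I_n+A_{j,i}\\ (\tr A_{i,j})I_n+A_{i,j} & (\tr A_{j,j})I_n+A_{j,j}\end{pmatrix}\ge 0 .
\]
The idea is that assembling this information over all pairs $(i,j)$ is exactly the statement that the $mn\times mn$ matrix with $(i,j)$ block $(\tr A_{i,j})I_n + A_{j,i}$ — equivalently $(\tr_2 A)\otimes I_n + A^{\tau}$ after suitable identification — is positive semidefinite. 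So the first key step is to prove
\[
(\tr_2 A)\otimes I_n + A^{\tau}\ge 0
\]
from $2$-copositivity (and the positive semidefiniteness of $A$, hence of all its $2\times 2$ compressions).

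The second step is to remove the partial transpose. Here I would use the standard fact that conjugation by the "flip" or by a block permutation does not change positive semidefiniteness, together with the observation that $\tr_2(A^{\tau})=(\tr_2 A)^{T}$ has the same spectrum, and more to the point that one can instead run the whole argument with the roles of first and second partial trace interchanged. Concretely, applying the completely positive (indeed completely PPT) conclusion to $A$ itself rather than to $A^{\tau}$, i.e. using genuine $2$-positivity where needed, should give the "untransposed" inequality
\[
(\tr_2 A)\otimes I_n + A \ge 0?
\]
— but that is false in general, so the correct route is: we have $(\tr_2 A)\otimes I_n+A^{\tau}\ge 0$, and separately, applying the analogous argument in the other tensor slot (using that $\Psi$ acting blockwise in the other factor is also positive), we get $I_m\otimes(\tr_1 A)+A\ge $ something. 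The cleanest path is to combine the displayed PPT-type inequality with Ando's inequality \eqref{eqando}. Indeed \eqref{eqando} reads $(\tr A)I_m\otimes I_n+A\ge I_m\otimes(\tr_1 A)+(\tr_2 A)\otimes I_n$; rearranging gives
\[
(\tr A)I_m\otimes I_n - (\tr_2 A)\otimes I_n - \bigl(A-I_m\otimes(\tr_1 A)\bigr)\ge 2\bigl(I_m\otimes(\tr_1 A)-(\tr_2 A)\otimes I_n\bigr)?
\]
This is not yet the claim, so the genuine content is to produce an \emph{independent} lower bound. The actual mechanism I expect to use: from $(\tr_2 A)\otimes I_n+A^{\tau}\ge 0$ and the parallel inequality obtained by swapping factors, namely $I_m\otimes(\tr_1 A)+{}^{\tau}\!A\ge 0$ where ${}^{\tau}\!A$ transposes within each block, add the two and recognize that $A^{\tau}+{}^{\tau}\!A$ relates to $A$ and its full transpose. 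After these manipulations the target \eqref{eqmain}, i.e.
\[
(\tr A)I_m\otimes I_n - (\tr_2 A)\otimes I_n\ \ge\ A - I_m\otimes(\tr_1 A),
\]
should drop out by subtracting the PPT inequality $(\tr_2 A)\otimes I_n+A^{\tau}\ge 0$ from the trivial identity bounding $(\tr A)I\otimes I$ in terms of $I_m\otimes(\tr_1 A)+(\tr_2 A)\otimes I_n - A^{\tau}+A^{\tau}$; the bookkeeping of which transpose sits where is the crux.

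I expect the main obstacle to be precisely this bookkeeping: tracking the difference between the partial transpose $A^{\tau}$ (which transposes the block pattern) and the blockwise transpose, and arranging the pieces so that the unwanted transposed terms cancel against Ando's inequality \eqref{eqando} and leave only $A$ and $I_m\otimes(\tr_1 A)$. A clean way to sidestep sign/transpose confusion is to test both sides of \eqref{eqmain} against an arbitrary unit vector $x\in\mathbb{C}^{mn}$ written in block form $x=(x_1,\dots,x_m)$ with $x_i\in\mathbb{C}^n$: the right-hand side becomes $\langle x, (A-I_m\otimes \tr_1 A)x\rangle=\sum_{i,j}\langle x_i,A_{i,j}x_j\rangle-\sum_i(\tr A)\langle? \rangle$, and the $2$-copositivity inequality for the compression to coordinates $i$ and $j$ gives, for the vector $(x_i, \bar{x}_j)$ or $(x_i, x_j)$ appropriately conjugated, exactly the term-by-term bound needed; summing over $i<j$ and adding the diagonal terms reconstitutes \eqref{eqmain}. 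So the concrete plan is: (1) write \eqref{eqmain} as a scalar inequality $\langle x, (\text{LHS}-\text{RHS})x\rangle\ge 0$; (2) expand both quadratic forms into sums over pairs $(i,j)$; (3) for each pair invoke $2$-copositivity of $\Psi$ applied to $\left(\begin{smallmatrix}A_{i,i}&A_{i,j}\\A_{j,i}&A_{j,j}\end{smallmatrix}\right)\ge0$ evaluated at an appropriate pair of vectors to get the pairwise inequality; (4) sum up. The delicate point in step (3) is choosing the test vectors (and the correct complex conjugates) so that the cross terms $A_{i,j}$ appear with the right coefficients; that is where I would spend the care.
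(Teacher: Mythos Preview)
Most of your proposal is a sequence of false starts that you yourself flag as not working: the attempt to pass through $A^{\tau}$ and then ``remove the partial transpose'', the attempt to combine with Ando's inequality \eqref{eqando}, and the two-transpose juggling all lead nowhere, and you say so. None of that is needed.

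Your final plan (1)--(4) is the right idea, and if carried out it gives a cleaner proof than the paper's. Write $B_{i,j}:=(\tr A_{i,j})I_n+A_{i,j}$. The block matrix you must show to be positive semidefinite,
\[
\Gamma:=(\tr A)I_m\otimes I_n + I_m\otimes(\tr_1 A) - A - (\tr_2 A)\otimes I_n,
\]
has $(i,i)$ block $\sum_{k\ne i}B_{k,k}$ and $(i,j)$ block $-B_{i,j}$ for $i\ne j$. Hence for $x=(x_1,\dots,x_m)$,
\[
\langle x,\Gamma x\rangle=\sum_{i<j}\Bigl(\langle x_i,B_{j,j}x_i\rangle+\langle x_j,B_{i,i}x_j\rangle-\langle x_i,B_{i,j}x_j\rangle-\langle x_j,B_{j,i}x_i\rangle\Bigr).
\]
Each summand equals $\bigl\langle (x_j,-x_i),\,\bigl(\begin{smallmatrix}B_{i,i}&B_{j,i}\\ B_{i,j}&B_{j,j}\end{smallmatrix}\bigr)(x_j,-x_i)\bigr\rangle\ge 0$ by $2$-copositivity of $\Psi$ applied to the principal $2\times 2$ block of $A$ indexed by $\{i,j\}$. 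So the missing ingredient you were unsure about is simply the test vector $(x_j,-x_i)$; no complex conjugation is involved. Equivalently, at the matrix level $\Gamma=\sum_{i<j}E_{ij}^*H_{ij}E_{ij}$ with $H_{ij}=\bigl(\begin{smallmatrix}B_{j,j}&-B_{i,j}\\ -B_{j,i}&B_{i,i}\end{smallmatrix}\bigr)\ge 0$, where $E_{ij}$ is the obvious $2n\times mn$ selection matrix.

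The paper instead proves \eqref{eqmain} by induction on $m$: the base case $m=2$ is exactly the computation that $H_{12}\ge 0$ (obtained by conjugating the $2$-copositive matrix by $\bigl(\begin{smallmatrix}0&-I\\ I&0\end{smallmatrix}\bigr)$), and the inductive step splits $\Gamma$ as $\Gamma_1+\Gamma_2$, with $\Gamma_1$ the $(m-1)\times(m-1)$ upper-left instance and $\Gamma_2$ a sum of $m-1$ matrices each $*$-congruent to one of the $H_{i,m}$. Your direct pair decomposition does in one stroke what the paper's induction does recursively; both rest on the same $2\times 2$ calculation coming from Lemma~\ref{lem21}.
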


  	\begin{proof} 
The proof is by induction on $m$. When $m=1$, there is nothing to prove. 
We now prove the base case   $m=2$. In this case, the required inequality is  
\begin{align*}  
& \begin{bmatrix}  (\tr A )I_n& 0\\ 0& (\tr A)I_n \end{bmatrix} - 
\begin{bmatrix} (\tr A_{1,1})I_n& (\tr A_{1,2})I_n\\ (\tr A_{2,1})I_n & (\tr A_{2,2})I_n \end{bmatrix} 
  \\
 &\quad \ge  \begin{bmatrix} A_{1,1}& A_{1,2}\\ A_{2,1} & A_{2,2} \end{bmatrix} - 
\begin{bmatrix} A_{1,1}+A_{2,2}& 0\\ 	0& A_{1,1}+A_{2,2}\end{bmatrix}, 
 \end{align*}
 or equivalently (note that $\tr A=\tr A_{1,1} +\tr A_{2,2}$),	
 \begin{eqnarray}\label{proofe1}
 H:=\begin{bmatrix}
  		(\tr A_{2,2})I_n+ A_{2,2}&   -A_{1,2}-(\tr A_{1,2})I_n\\  
-A_{2,1}-(\tr A_{2,1})I_n& 	(\tr A_{1,1})I_n+ A_{1,1}
  	\end{bmatrix}\ge 0.  \end{eqnarray}
  
  		By   Lemma \ref{lem21}, we have
  			\begin{eqnarray*} \begin{bmatrix}
  					(\tr A_{1,1})I_n+ A_{1,1}& (\tr A_{2,1})I_n+ A_{2,1}\\ 	
(\tr A_{1,2})I_n+ A_{1,2}& 	(\tr A_{2,2})I_n+ A_{2,2}
  				\end{bmatrix}\ge 0,  \end{eqnarray*} and so 
  				\begin{eqnarray*} H=\begin{bmatrix}0 & -I_n\\  I_n& 0\end{bmatrix} 
\begin{bmatrix} (\tr A_{1,1})I_n+ A_{1,1}& (\tr A_{2,1})I_n+ A_{2,1}\\ 
(\tr A_{1,2})I_n+ A_{1,2}& (\tr A_{2,2})I_n+ A_{2,2} \end{bmatrix} 
\begin{bmatrix}0 &  I_n\\  -I_n& 0\end{bmatrix} \ge 0,  \end{eqnarray*} 
which confirms the desired  (\ref{proofe1}).

  		 Suppose the result (\ref{eqmain}) is true for $m=k-1>1$, 
and then we consider the case $m=k$, 
  		\begin{align*}  
\Gamma &:=(\tr A)I_k\otimes I_n+ I_k \otimes(\tr_1 A)-A-(\tr_2 A) \otimes I_n	\\
&= \left(\tr \sum_{i=1}^{k}A_{i,i}\right)I_k\otimes I_n
+I_k\otimes \left(\sum_{j=1}^k A_{j,j}\right)-A-([\tr A_{i,j}]_{i,j=1}^k) \otimes I_n   		\\
&= \begin{bmatrix}  \sum_{i=1}^{k-1}(\tr A_{i,i})I_n &  &&  \\ &\ddots  & \\& &   \sum_{i=1}^{k-1}(\tr A_{i,i})I_n& \\ & & &  0 \end{bmatrix} \\
&\quad  + \begin{bmatrix} (\tr A_{k,k})I_n &  &&  \\ &\ddots  & \\& & (\tr A_{k,k})I_n & \\ & & &   \sum_{i=1}^k(\tr A_{i,i})I_n \end{bmatrix} 	\\ 
&\quad   + \begin{bmatrix}  \sum_{i=1}^{k-1}  A_{i,i} &  &&  \\ &\ddots  & \\& &   \sum_{i=1}^{k-1}  A_{i,i}& \\ & & &  0 \end{bmatrix}+ \begin{bmatrix}   A_{k,k} &  &&  \\ &\ddots  & \\& &   A_{k,k} & \\ & & &   \sum_{i=1}^k  A_{i,i} \end{bmatrix}	\\ 
& \quad  - \begin{bmatrix}   A_{1,1} & \cdots  & A_{1, k-1}& 0 \\ \vdots & & \vdots  & \vdots \\ A_{k-1,1}& \cdots &  A_{k-1, k-1}  & 0 \\ 0 & \cdots &  0&  0 \end{bmatrix}-\begin{bmatrix}  0 & \cdots  & 0&  A_{1, k} \\ \vdots & & \vdots  & \vdots \\0& \cdots &  0  & A_{k-1,k}  \\ A_{k,1} & \cdots &  A_{k,k-1} &  A_{k,k} \end{bmatrix}   	\\
&\quad   - \begin{bmatrix}  (\tr A_{1,1})I_n & \cdots  & (\tr A_{1, k-1})I_n& 0 \\ 
 \vdots & & \vdots  & \vdots \\ (\tr A_{k-1,1})I_n& \cdots &  (\tr A_{k-1, k-1})I_n  & 0 \\ 0 & \cdots &  0&  0 \end{bmatrix} \\
&\quad  -\begin{bmatrix}  0 & \cdots  & 0& (\tr A_{1, k})I_n \\ 
  \vdots & & \vdots  & \vdots \\0& \cdots &  0  & (\tr A_{k-1,k})I_n  \\ 
  (\tr A_{k,1})I_n & \cdots &  (\tr A_{k,k-1})I_n &  (\tr A_{k,k})I_n \end{bmatrix}.
\end{align*} 	
  		After some rearrangement, we have
\[\Gamma =\Gamma_1 +\Gamma_2, \]
  		where 	
\begin{align*}  
\Gamma_1&:=\begin{bmatrix}  \sum_{i=1}^{k-1}(\tr A_{i,i})I_n &  &&  \\ &\ddots  & \\& &   \sum_{i=1}^{k-1}(\tr A_{i,i})I_n& \\ & & &  0 \end{bmatrix}
  		  +  \begin{bmatrix}  \sum_{i=1}^{k-1}  A_{i,i} &  &&  \\ &\ddots  & \\& &   \sum_{i=1}^{k-1}  A_{i,i}& \\ & & &  0 \end{bmatrix} \\ 
& \phantom{:}- \begin{bmatrix}   A_{1,1} & \cdots  & A_{1, k-1}& 0 \\ \vdots & & \vdots  & \vdots \\ A_{k-1,1}& \cdots &  A_{k-1, k-1}  & 0 \\ 0 & \cdots &  0&  0 \end{bmatrix} - \begin{bmatrix}  (\tr A_{1,1})I_n & \cdots  & (\tr A_{1, k-1})I_n& 0 \\ \vdots & & \vdots  & \vdots \\ (\tr A_{k-1,1})I_n& \cdots &  (\tr A_{k-1, k-1})I_n  & 0 \\ 0 & \cdots &  0&  0 \end{bmatrix},
  		\end{align*} 
and	
\begin{align*}  
\Gamma_2&:= \begin{bmatrix} (\tr A_{k,k})I_n &  &&  \\ &\ddots  & \\& & (\tr A_{k,k})I_n & \\ & & &   \sum_{i=1}^k(\tr A_{i,i})I_n \end{bmatrix}
  			+ \begin{bmatrix}   A_{k,k} &  &&  \\ &\ddots  & \\& &   A_{k,k} & \\ & & &   \sum_{i=1}^k  A_{i,i} \end{bmatrix}  \\
& \phantom{:} - \begin{bmatrix}  0 & \cdots  & 0&  A_{1, k} \\ \vdots & & \vdots  & \vdots \\0& \cdots &  0  & A_{k-1,k}  \\ A_{k,1} & \cdots &  A_{k,k-1} &  A_{k,k} \end{bmatrix}	 - 
\begin{bmatrix}  0 & \cdots  & 0& (\tr A_{1, k})I_n \\ 
 \vdots & & \vdots  & \vdots \\ 
0& \cdots &  0  & (\tr A_{k-1,k})I_n  \\ 
(\tr A_{k,1})I_n & \cdots &  (\tr A_{k,k-1})I_n &  (\tr A_{k,k})I_n \end{bmatrix} 		\\
&\phantom{:}= \begin{bmatrix}  (\tr A_{k,k})I_n+A_{k,k}  &   &  & -A_{1,k}-(\tr A_{1, k})I_n \\     
   & \ddots  &  & \vdots  \\  &  &  (\tr A_{k,k})I_n+A_{k,k}   &  -A_{k-1,k}-(\tr A_{k-1,k})I_n  \\ 
  -A_{k,1}-(\tr A_{k,1})I_n & \cdots &  -A_{k,k-1}-(\tr A_{k,k-1})I_n &  
   \sum_{i=1}^{k-1}\big((\tr A_{i,i})I_n+A_{i,i}\big)  \end{bmatrix}.
\end{align*} 	
  		Now by induction hypothesis, we get that $\Gamma_1$ is positive semidefinite. 
It remains to show that   $\Gamma_2$ is positive semidefinite. 
  		
Observing that  $\Gamma_2$ can be written as a sum of $k-1$  
 matrices, in which each summand is $*$-congruent to 
$$H_i:=\begin{bmatrix}  (\tr A_{k,k})I_n+A_{k,k}  &    -A_{i,k}-(\tr A_{i, k}))I_n \\   
 - A_{k,i}-(\tr A_{k,i})I_n &  (\tr A_{i,i})I_n+A_{i,i} \end{bmatrix}, \quad 
  		 i=1, 2,\ldots, k-1.$$  
Just like the proof of the base case, 
we infer from Lemma \ref{lem21} that $H_i\ge 0$ for all $i=1,2, \ldots, k-1$. 
Therefore, $\Gamma_2 \ge 0$, thus the proof of induction step is complete.  
  	   \end{proof}

Before showing our proof of Theorem \ref{thm34}, 
we need one more lemma for our purpose.

 \begin{lemma}(see \cite{Lin16})\label{lem31}  
Let $X, Y, W, Z\in \mathbb{M}_\ell$ be positive semidefinite.  
If $X+Y\ge W+Z$, $X\ge W$ and $X\ge Z$, then
 	\[ \label{lin} \det X+\det Y\ge \det W+\det Z. \]
\end{lemma}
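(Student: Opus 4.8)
The plan is to normalize the problem to the case $X=I_\ell$ and then peel it down to a single determinantal inequality for one pair of matrices. First I would remove a degeneracy: replacing $X$ by $X+\varepsilon I_\ell$ preserves all three hypotheses (since $X+\varepsilon I_\ell\ge X\ge W$, $X+\varepsilon I_\ell\ge Z$, and $X+\varepsilon I_\ell+Y\ge W+Z$), and letting $\varepsilon\to 0^+$ at the very end recovers the original statement by continuity of $\det$, so I may assume $X>0$. Conjugating every matrix by $X^{-1/2}$ is a congruence, hence preserves the Löwner order and multiplies each of the four determinants by $1/\det X$. Writing $\widetilde W=X^{-1/2}WX^{-1/2}$, and similarly for $Y,Z$, the hypotheses become $0\le\widetilde W\le I_\ell$, $0\le\widetilde Z\le I_\ell$, $\widetilde Y\ge 0$ and $I_\ell+\widetilde Y\ge\widetilde W+\widetilde Z$, while the conclusion, divided through by $\det X$, becomes $1+\det\widetilde Y\ge\det\widetilde W+\det\widetilde Z$. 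Thus it suffices to prove: if $0\le W,Z\le I_\ell$, $Y\ge 0$ and $Y\ge W+Z-I_\ell$, then $1+\det Y\ge\det W+\det Z$.

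Next I would split according to the sign of $\det W+\det Z-1$. If $\det W+\det Z\le 1$, the conclusion is immediate since $\det Y\ge 0$. Otherwise $\det W+\det Z>1$, and the key elementary observation is that $W\le I_\ell$ forces $W\ge(\det W)\,I_\ell$: every eigenvalue of $W$ lies in $[0,1]$, so the least eigenvalue is at least the product of all of them, namely $\det W$. The same bound for $Z$ gives $W+Z\ge(\det W+\det Z)\,I_\ell>I_\ell$, so $W+Z-I_\ell\ge 0$. Since the determinant is monotone on the positive semidefinite cone and $Y\ge W+Z-I_\ell\ge 0$, we get $\det Y\ge\det(W+Z-I_\ell)$, and the whole statement reduces to the single inequality
\[
\det(W+Z-I_\ell)+1\ge\det W+\det Z,\qquad 0\le W,Z\le I_\ell,\ \ W+Z\ge I_\ell .
\]

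To prove this last inequality I would put $V=I_\ell-Z$, so that $0\le V\le W\le I_\ell$, and rewrite it as $\det I_\ell-\det(I_\ell-V)\ge\det W-\det(W-V)$. This is exactly the statement that the map $M\mapsto\det M-\det(M-V)$ is nondecreasing in the Löwner order on $\{M:M\ge V\}$, compared at the two points $W\le I_\ell$. Differentiating along a segment with nonnegative increment $\Delta$ and using $\frac{d}{dt}\det M(t)=\tr\bigl(\operatorname{adj}(M(t))\,\dot M(t)\bigr)$, the derivative equals $\tr\bigl[(\operatorname{adj}(M)-\operatorname{adj}(M-V))\Delta\bigr]$, which is nonnegative once $\operatorname{adj}(M)\ge\operatorname{adj}(M-V)\ge 0$.

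Consequently the whole lemma comes down to monotonicity of the adjugate on the positive semidefinite cone, $P\ge Q\ge 0\Rightarrow\operatorname{adj}(P)\ge\operatorname{adj}(Q)$, and I expect this to be the main obstacle. I would establish it via compound matrices: the $(\ell-1)$-st compound $C_{\ell-1}(A)$ is the compression of $A^{\otimes(\ell-1)}$ to the antisymmetric subspace, and $A^{\otimes(\ell-1)}$ is itself monotone (telescoping $A^{\otimes k}-B^{\otimes k}$ into a sum of positive terms of the form $A^{\otimes i}\otimes(A-B)\otimes B^{\otimes(k-1-i)}$). Since compressions preserve the Löwner order, $C_{\ell-1}$ is monotone, and because $\operatorname{adj}(A)$ is obtained from $C_{\ell-1}(A)$ by a fixed signed-permutation conjugation, adjugate monotonicity follows. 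Assembling the pieces then yields Lemma \ref{lem31}.
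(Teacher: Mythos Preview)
The paper does not prove Lemma~\ref{lem31}; it simply quotes it from \cite{Lin16}. So there is no in-paper argument to compare against.

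Your argument is correct. After the standard reduction to $X=I_\ell$, the clean split into the trivial case $\det W+\det Z\le 1$ and the case where $W+Z-I_\ell\ge 0$ is valid: the step ``$W\le I_\ell\Rightarrow W\ge(\det W)I_\ell$'' is fine since every eigenvalue of $W$ lies in $[0,1]$ and hence dominates the product of all of them. The substitution $V=I_\ell-Z$ then reduces everything to monotonicity of $M\mapsto\det M-\det(M-V)$ on $\{M\ge V\}$, and your derivative computation via Jacobi's formula, together with monotonicity of the adjugate on the positive semidefinite cone, finishes it. The adjugate monotonicity is the only nontrivial ingredient, and your justification through the $(\ell-1)$st compound is sound: $A\ge B\ge 0\Rightarrow A^{\otimes(\ell-1)}\ge B^{\otimes(\ell-1)}$ by the telescoping identity, compression to the antisymmetric subspace preserves the order, and $\operatorname{adj}(A)$ differs from $C_{\ell-1}(A)$ by a fixed unitary (signed permutation) conjugation, so the order transfers.

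For context, Lin's original proof in \cite{Lin16} also normalizes to $X=I_\ell$ but then argues via a Minkowski-type bound and the observation $\det W+\det Z\le 1+\det W\det Z$ for $0\le W,Z\le I_\ell$, avoiding the adjugate/compound machinery. Your route is a bit heavier in tools but self-contained and gives a transparent reason why the inequality holds.
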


We remark that  Lemma \ref{lem31} implies the determinantal inequality: 
\[ \det (A+B+C) +\det C \ge \det (A+C) +\det (B+C), \]
where $A,B$ and $C$ are positive semidefinite; 
see \cite{Lin14b} and \cite{Li20} for more details.  

\vspace{0.4cm}

We are now in a position to present the proof of Theorem \ref{thm34}. 

\vspace{0.4cm}

\noindent 
{\bf Proof of Theorem \ref{thm34}}~
In view of (\ref{eqlin}), it suffices to show 
\begin{equation} \label{eq111}
 (\tr A)^{mn} +\det (\tr_1 A)^m \ge \det A +\det (\tr_2 A)^n. 
\end{equation}
Let $X=(\tr A)I_m\otimes I_n, Y=I_m\otimes (\tr_1 A),W=A,Z=(\tr_2 A)\otimes I_n$, 
respectively. It is easy to see that 
\[ (\tr A)I_m =\sum_{i=1}^m (\tr A_{i,i})I_m =
\bigl( \tr (\tr_2 A)\bigr)I_m \ge \tr_2 A, \]  
which implies that $X\ge Z\ge 0$, and clearly $X\ge W\ge 0$. 
Moreover, by Proposition \ref{thm22}, 
$X+Y\ge W+Z$. That is, all conditions in Lemma \ref{lem31} are satisfied. Therefore, 
\begin{align*}
(\tr A)^{mn} + \det \bigl( I_m\otimes (\tr_1 A) \bigr)
\ge  \det A  +\det \bigl( (\tr_2 A) \otimes I_n \bigr).
\end{align*}
Since $\det (X\otimes Y) =(\det X)^n(\det Y)^m$  
for every $X\in \mathbb{M}_m$ and $Y\in \mathbb{M}_n$, 
this completes the proof.

\vspace{0.4cm}

Using the same idea in previous proof and 
combining \cite[Proposition 2.3]{HuangLi20}, 
one could also get the following Proposition \ref{prop35}. 
We  leave the details for the interested reader.

\begin{proposition} \label{prop35}
 Let $A\in \mathbb{M}_m(\mathbb{M}_n)$ be PPT. Then
\[ 
	(\tr A)^{mn} + \det(\tr_2 A)^n \ge 
   \det A + \det(\tr_1 A)^m .
\]
\end{proposition}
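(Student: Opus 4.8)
\textbf{Proof proposal for Proposition \ref{prop35}.}
The plan is to mirror the proof of Theorem \ref{thm34}, replacing Proposition \ref{thm22} by its PPT-counterpart \cite[Proposition 2.3]{HuangLi20}. Recall that \cite[Proposition 2.3]{HuangLi20} asserts that when $A$ is PPT,
\[
(\tr A)I_m\otimes I_n + (\tr_2 A)\otimes I_n \ge A + I_m\otimes (\tr_1 A),
\]
which should be thought of as the ``copositive side'' of Ando's inequality, valid precisely because $A^\tau\ge 0$ gives us the extra positivity needed. The goal is to feed this into Lemma \ref{lem31} with a suitable choice of the four matrices.

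First I would set $X=(\tr A)I_m\otimes I_n$, $Y=(\tr_2 A)\otimes I_n$, $W=A$, and $Z=I_m\otimes(\tr_1 A)$. All four are positive semidefinite: $A\ge 0$ by hypothesis, and $\tr_1 A,\tr_2 A\ge 0$ as noted in the introduction. Next I would verify the three hypotheses of Lemma \ref{lem31}. The inequality $X+Y\ge W+Z$ is exactly \cite[Proposition 2.3]{HuangLi20}. For $X\ge W$: since $A\ge 0$, each $A_{i,i}\ge 0$, so $\tr A=\sum_i \tr A_{i,i}\ge$ every eigenvalue of $A$ forced by $A\le (\tr A) I$; more directly, $(\tr A)I_m\otimes I_n - A\ge 0$ because $\tr A$ is the sum of all eigenvalues of $A$, each nonnegative, hence at least the largest. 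For $X\ge Z$: we have $I_m\otimes(\tr_1 A)\le I_m\otimes\bigl((\tr(\tr_1 A))I_n\bigr)=(\tr A)I_m\otimes I_n$, using $\tr_1 A\le (\tr\tr_1 A)I_n=(\tr A)I_n$ since $\tr_1 A\ge 0$. Then Lemma \ref{lem31} yields
\[
(\tr A)^{mn} + \det\bigl((\tr_2 A)\otimes I_n\bigr) \ge \det A + \det\bigl(I_m\otimes(\tr_1 A)\bigr).
\]
Finally I would apply the tensor determinant identity $\det(P\otimes Q)=(\det P)^n(\det Q)^m$ for $P\in\mathbb{M}_m$, $Q\in\mathbb{M}_n$ to rewrite $\det((\tr_2 A)\otimes I_n)=\det(\tr_2 A)^n$ and $\det(I_m\otimes(\tr_1 A))=\det(\tr_1 A)^m$, which gives the claimed inequality.

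The only genuine obstacle is that this argument is \emph{not} self-contained: it rests on \cite[Proposition 2.3]{HuangLi20}, the copositive analogue of Proposition \ref{thm22}. If one wanted a fully internal proof, one would need to reprove that proposition here by the same induction on $m$ used above for Proposition \ref{thm22}, the base case $m=2$ again reducing via a $*$-congruence by $\begin{bmatrix} 0 & I_n\\ I_n & 0\end{bmatrix}$-type transformations to the $2$-positivity (rather than $2$-copositivity) of the map $X\mapsto(\tr X)I+X$ from Lemma \ref{lem21}, together with the PPT hypothesis to control the off-diagonal blocks $A_{j,i}$. Everything else — the positivity checks and the determinant bookkeeping — is routine and essentially identical to the proof of Theorem \ref{thm34}, which is why the details are safely left to the reader.
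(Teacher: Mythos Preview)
Your proposal is correct and follows exactly the approach the paper indicates: mirror the proof of Theorem \ref{thm34} with the roles of $\tr_1$ and $\tr_2$ interchanged, invoking \cite[Proposition 2.3]{HuangLi20} in place of Proposition \ref{thm22} to supply the hypothesis $X+Y\ge W+Z$ of Lemma \ref{lem31}. Your verifications of $X\ge W$ and $X\ge Z$ and the final tensor-determinant bookkeeping are all in order.
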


At the end of the paper, we extend the determinantal inequality (\ref{eq111}) to 
a larger class of matrices whose numerical ranges are contained in a sector. 
The same extension of (\ref{eqlin}) can be found in \cite{YLC19}. 
Before showing our extension, we first introduce some standard notations.

\vspace{0.3cm}

For $A\in \mathbb{M}_n$, the Cartesian (Toeptliz) decomposition 
$A=\Re A+i\Im A$, where $\Re A=\frac{1}{2}(A+A^*)$ and $\Im A=\frac{1}{2i}(A-A^*)$. 
Let $|A|$ denote the positive square root of $A^*A$, 
i.e., $|A|=(A^*A)^{1/2}$.  We denote the $i$-th largest singular value of $A$ by 
$s_i(A)$, then $s_i(A)=\lambda_i(|A|)$, 
the $i$-th largest eigenvalue of $|A|$. 
Recall that the numerical range of $A\in \mathbb{M}_n$ is defined by 
\[ W(A)=\{x^*Ax : x\in \mathbb{C}^*,x^*x=1\}. \]
For $\alpha \in [0,\frac{\pi}{2})$, let $S_{\alpha}$ be the sector on the complex plane given by 
\[ S_{\alpha}=\{z\in \mathbb{C}: \Re z>0,|\Im z|\le (\Re z)\tan \alpha \} 
=\{re^{i\theta } : r>0,|\theta |\le \alpha \}. \]
Obviously, if $W(A)\subseteq S_{\alpha}$ for $\alpha \in [0,\frac{\pi}{2})$, 
then $\Re (A)$ is positive definite and if $W(A)\subseteq S_0$, 
then $A$ is positive definite. 
Such class of matrices whose numerical ranges are contained in a sector 
is called the {\it sector matrices class}. 
Clearly, the concept of sector matrices is an extension of that of positive definite matrices. 
Over the past years, 
various studies on sector matrices have been obtained in the literature; 
see, e.g., \cite{Choi19, Jiang19, Kua17, Lin15, YLC19, Zhang15}.

First, we list two lemmas which are useful to establish our extension (Theorem \ref{thm36}). 

\begin{lemma} (see \cite{Lin15}) \label{lem34}
Let $0\le \alpha <\frac{\pi}{2}$ and 
$A\in \mathbb{M}_n$ with $W(A)\subseteq S_{\alpha}$. Then 
\[ |\det A| \le (\sec \alpha)^n \det (\Re A). \]
\end{lemma}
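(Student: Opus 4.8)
\textbf{Proof proposal for Lemma \ref{lem34}.}
The plan is to reduce to the positive definite case, where the inequality becomes an equality, via the Cartesian decomposition and a compression/rotation argument. Write $A = \Re A + i \Im A =: B + iC$ with $B = \Re A > 0$ (positive definiteness of $B$ follows from $W(A) \subseteq S_\alpha$). The first step is to factor out $B^{1/2}$: since $B$ is positive definite, set $A = B^{1/2}(I + iK)B^{1/2}$ where $K = B^{-1/2} C B^{-1/2}$ is Hermitian. Then $|\det A| = \det B \cdot |\det(I + iK)|$, and since $K$ is Hermitian with real eigenvalues $\mu_1, \ldots, \mu_n$, we get $|\det(I+iK)| = \prod_j |1 + i\mu_j| = \prod_j \sqrt{1 + \mu_j^2}$. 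So the target inequality is equivalent to $\prod_j \sqrt{1+\mu_j^2} \le (\sec\alpha)^n$, i.e.\ $1 + \mu_j^2 \le \sec^2\alpha$ termwise would suffice, which is $|\mu_j| \le \tan\alpha$ for every $j$.

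The second step is therefore to show that the eigenvalues of $K = B^{-1/2} C B^{-1/2}$ lie in $[-\tan\alpha, \tan\alpha]$. This is exactly the spectral encoding of the sector condition: for any unit vector $x$, writing $z = x^* A x \in W(A) \subseteq S_\alpha$, we have $\Re z = x^* B x > 0$ and $|\Im z| = |x^* C x| \le (\tan\alpha)\, x^* B x$. Substituting $x = B^{-1/2} y / \lVert B^{-1/2} y\rVert$ turns this into $|y^* K y| \le (\tan\alpha)\, y^* y$ for all $y$, which by the min-max characterization of eigenvalues of the Hermitian matrix $K$ gives $\rho(K) \le \tan\alpha$. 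Hence $1 + \mu_j^2 \le 1 + \tan^2\alpha = \sec^2\alpha$ for each $j$, and taking the product over $j$ yields $|\det(I+iK)| \le (\sec\alpha)^n$. Combining with $|\det A| = \det B \cdot |\det(I+iK)|$ and $\det B = \det(\Re A)$ completes the argument.

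The main obstacle — really the only subtlety — is the passage from the sector condition on $W(A)$ to the uniform bound $|y^* K y| \le (\tan\alpha)\, y^* y$: one has to be careful that the substitution $x \mapsto B^{-1/2}y/\lVert B^{-1/2}y\rVert$ covers all unit vectors $x$ (it does, since $B^{1/2}$ is invertible, so the map $y \mapsto B^{-1/2}y$ is a bijection of $\mathbb{C}^n$), and that one is allowed to clear the positive denominator $\lVert B^{-1/2}y\rVert^2 = y^* B^{-1} y$ without affecting the inequality. Once that reduction is in place, everything else is the elementary identity $|1+i\mu| = \sqrt{1+\mu^2}$ and the observation $1 + \tan^2\alpha = \sec^2\alpha$. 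Alternatively, if one prefers to avoid the explicit factorization, the same conclusion follows by invoking the known fact (e.g.\ from \cite{Lin15}) that $W(A) \subseteq S_\alpha$ implies $\pm(\tan\alpha)\Re A \ge \Im A$ in the Löwner order together with $\Re A > 0$, and then applying the generalized eigenvalue/simultaneous-diagonalization argument to the pencil $(\Re A, \Im A)$.
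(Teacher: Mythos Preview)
Your proof is correct. The paper itself does not supply a proof of this lemma; it simply quotes the result from \cite{Lin15}. Your argument via the factorization $A = B^{1/2}(I+iK)B^{1/2}$ with $K = B^{-1/2}CB^{-1/2}$, followed by the eigenvalue bound $|\mu_j(K)| \le \tan\alpha$ obtained from the sector condition, is exactly the standard route (and is essentially the argument in \cite{Lin15}). The only point worth double-checking --- and you flagged it yourself --- is that the substitution $x = B^{-1/2}y/\lVert B^{-1/2}y\rVert$ is a bijection of the unit sphere, which follows from invertibility of $B^{1/2}$; after that the remaining steps are just $|1+i\mu| = \sqrt{1+\mu^2}$ and $1+\tan^2\alpha = \sec^2\alpha$.
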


\begin{lemma} (see {\cite[p. 510]{HJ13}}) \label{lem35}
Let $A$ be an $n$-square complex matrix. Then 
\[ \lambda_i(\Re A) \le s_i(A),\quad i=1,2,\ldots ,n. \]
Moreover, if $X$ has positive definite real part, then 
\[ \det \Re A + |\det \Im A| \le |\det A|. \]
\end{lemma}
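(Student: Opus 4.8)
\textbf{Proof proposal for Lemma \ref{lem35}.}

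The plan is to prove the two assertions separately, both from the singular value decomposition of $A$. For the first inequality $\lambda_i(\Re A)\le s_i(A)$, I would recall that $\Re A=\frac12(A+A^*)$ and that for Hermitian matrices the Weyl-type inequality $\lambda_i\bigl(\frac12(A+A^*)\bigr)\le \frac12\bigl(s_i(A)+s_1(A^*)\bigr)$ is too crude; instead the sharp route is the standard fact (a special case of a theorem of Fan--Hoffman) that the eigenvalues of the Hermitian part are dominated by the singular values. Concretely, write $A=U\Sigma V^*$ with $\Sigma=\mathrm{diag}(s_1,\dots,s_n)$; then $\Re A=\frac12(U\Sigma V^*+V\Sigma U^*)$, and a congruence/variational argument with the Courant--Fischer min--max principle gives $\lambda_i(\Re A)=\min_{\dim M=n-i+1}\max_{0\ne x\in M}\frac{\Re(x^*Ax)}{x^*x}\le \min_{\dim M=n-i+1}\max_{0\ne x\in M}\frac{|x^*Ax|}{x^*x}\le s_i(A)$, where the last step uses $|x^*Ax|\le \|Ax\|\,\|x\|$ together with the min--max characterization of singular values. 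Since this is Lemma \ref{lem35} and quoted from \cite[p.~510]{HJ13}, I would simply cite it, but the above is the sketch one would give if a proof were demanded.

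For the determinantal inequality, assume $\Re A$ is positive definite. The key idea is to reduce to the case $\Re A=I$ by a congruence: set $B=(\Re A)^{-1/2}A(\Re A)^{-1/2}$, so that $\Re B=I$ and $B=I+iK$ with $K=\Im B$ Hermitian. Then $|\det A|=\det(\Re A)\cdot|\det B|=\det(\Re A)\cdot|\det(I+iK)|=\det(\Re A)\prod_i\sqrt{1+\mu_i^2}$, where $\mu_1,\dots,\mu_n$ are the eigenvalues of $K$. On the other side, $\det\Re A=\det(\Re A)$ (trivially, after the congruence it becomes $\det(\Re A)\cdot 1$) and $|\det\Im A|=\det(\Re A)\cdot|\det K|=\det(\Re A)\prod_i|\mu_i|$. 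So after dividing through by $\det(\Re A)>0$, the claimed inequality becomes the scalar inequality
\[ 1+\prod_{i=1}^n|\mu_i|\le \prod_{i=1}^n\sqrt{1+\mu_i^2}, \]
which follows by expanding the right-hand product and keeping only the first and last terms, or equivalently from the fact that $\prod(1+\mu_i^2)\ge 1+\prod\mu_i^2+2\prod|\mu_i|\cdot(\text{something})$ — more cleanly, $\prod(1+t_i)\ge 1+\prod t_i$ for $t_i\ge 0$, applied with $t_i=\mu_i^2$, gives $\prod(1+\mu_i^2)\ge 1+\prod\mu_i^2\ge \bigl(1+\prod|\mu_i|\bigr)^2$ is false in general, so one must be slightly more careful and use $\prod\sqrt{1+\mu_i^2}\ge\sqrt{1+\prod\mu_i^2}$ together with $\sqrt{1+s^2}\ge$ is not enough either; the correct elementary lemma is $\prod_i(1+a_i)\ge (1+(\prod a_i)^{1/n})^n\ge 1+\prod a_i$ — in any event this is an elementary step.

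I expect the only genuine obstacle to be the elementary scalar inequality at the end: one must verify $1+\prod|\mu_i|\le\prod\sqrt{1+\mu_i^2}$ honestly, e.g.\ by induction on $n$ using $\sqrt{(1+a^2)(1+b^2)}\ge 1+|ab|$ (which is just $(1+a^2)(1+b^2)=1+a^2+b^2+a^2b^2\ge 1+2|ab|+a^2b^2=(1+|ab|)^2$), peeling off one factor at a time. Everything else — the congruence reduction $A\mapsto(\Re A)^{-1/2}A(\Re A)^{-1/2}$, the multiplicativity of $\det$ under congruence, and the identification of $|\det(I+iK)|$ with $\prod\sqrt{1+\mu_i^2}$ via the spectral theorem for the Hermitian matrix $K$ — is routine. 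Since the statement is cited verbatim from Horn and Johnson, in the paper itself I would not reproduce this argument at all; the above is the proof I would write if the citation were unavailable.
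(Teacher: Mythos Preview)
The paper gives no proof of this lemma; it is quoted from \cite[p.~510]{HJ13}, exactly as you note, so there is nothing in-paper to compare your argument against. Your sketch is essentially correct. The Courant--Fischer route to $\lambda_i(\Re A)\le s_i(A)$ is the standard Fan--Hoffman argument, and the congruence $A\mapsto(\Re A)^{-1/2}A(\Re A)^{-1/2}=I+iK$ cleanly reduces the second assertion to the scalar claim
\[
1+\prod_{i=1}^n|\mu_i|\;\le\;\prod_{i=1}^n\sqrt{1+\mu_i^2}.
\]
Your two-factor inequality $\sqrt{(1+a^2)(1+b^2)}\ge 1+|ab|$ handles $n=2$, and the induction step is immediate: since $\sqrt{1+t^2}\ge\max\{1,|t|\}$, one has $(1+Q)\sqrt{1+\mu_n^{2}}\ge 1+Q|\mu_n|$ whenever $Q\ge 0$, so the meandering in your middle paragraph can be replaced by this one line.

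One genuine point to flag: the scalar inequality, and hence the determinantal assertion exactly as printed in the lemma, \emph{fails when $n=1$} (take $A=1+i$: then $\det\Re A+|\det\Im A|=2>\sqrt{2}=|\det A|$). Your induction therefore must start at $n=2$, not $n=1$, and strictly speaking the lemma requires $n\ge 2$. This is a defect in the stated lemma rather than in your method, and it is harmless for the paper's purposes: in the proof of Theorem~\ref{thm36} only the weaker Ostrowski--Taussky consequence $|\det A|\ge\det(\Re A)$ is ever used, and that holds in every dimension.
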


Now, we  provide the extension of (\ref{eq111}). 

\begin{theorem} \label{thm36}
Let $H\in \mathbb{M}_m(\mathbb{M}_n)$ be such that $W(H)\subseteq S_{\alpha}$. Then 
\[ (\tr |A|)^{mn} + |\det (\tr_1 A)|^m \ge 
(\cos \alpha )^{mn} \det |A| + (\cos \alpha)^{mn} |\det (\tr_2 A)|^n. \]
\end{theorem}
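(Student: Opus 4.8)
\textbf{Proof plan for Theorem \ref{thm36}.}
The plan is to mimic the proof of Theorem \ref{thm34}, replacing positive semidefiniteness by the sector hypothesis and paying the price of a $\sec\alpha$ factor each time we pass from a determinant to the determinant of a real part. First I would apply the Cartesian decomposition $A=\Re A+i\,\Im A$ and observe that since $W(A)\subseteq S_\alpha$, the matrix $\Re A\in\mathbb{M}_m(\mathbb{M}_n)$ is positive definite, so Theorem \ref{thm34} (or rather the key inequality (\ref{eq111})) applies to $\Re A$. Note that the partial trace maps are linear and real-linear, so $\tr_1(\Re A)=\Re(\tr_1 A)$ and $\tr_2(\Re A)=\Re(\tr_2 A)$, and also $\tr(\Re A)=\Re(\tr A)=\tr(\Re A)$; moreover $\tr(\Re A)=\sum_i \Re a_{ii}\le \sum_i |a_{ii}|\le \tr|A|$ is not quite what we want, so more carefully one uses $\tr\,\Re A\le \tr|A|$ coming from $\lambda_i(\Re A)\le s_i(A)$ in Lemma \ref{lem35}. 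Applying (\ref{eq111}) to $\Re A$ then yields
\[
(\tr\,\Re A)^{mn}+\det\bigl(\Re(\tr_1 A)\bigr)^m \ge \det(\Re A)+\det\bigl(\Re(\tr_2 A)\bigr)^n.
\]

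Next I would upgrade each term. On the left, $(\tr\,\Re A)^{mn}\le (\tr|A|)^{mn}$ by the singular value bound, and $\det\bigl(\Re(\tr_1 A)\bigr)^m\le |\det(\tr_1 A)|^m$ by the second part of Lemma \ref{lem35} (applied to $\tr_1 A$, whose real part $\Re(\tr_1 A)=\tr_1(\Re A)$ is positive definite, hence $W(\tr_1 A)\subseteq S_\beta$ for some $\beta<\pi/2$). On the right, I want lower bounds: by Lemma \ref{lem34} applied to $A$ (with $W(A)\subseteq S_\alpha$), $\det(\Re A)\ge (\cos\alpha)^{mn}|\det A|=(\cos\alpha)^{mn}\det|A|$, where $\det|A|=|\det A|$ since $|A|=(A^*A)^{1/2}$ and $\det|A|=|\det A|$. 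Similarly, for $\tr_2 A$ I would argue that $W(\tr_2 A)\subseteq S_\alpha$ as well — this is the standard fact that the partial trace (being a positive, unital-up-to-scaling completely positive map, or more simply a sum/compression) maps $S_\alpha$ into $S_\alpha$; then Lemma \ref{lem34} gives $\det\bigl(\Re(\tr_2 A)\bigr)^n\ge (\cos\alpha)^{mn}|\det(\tr_2 A)|^n$. Chaining these inequalities produces exactly the claimed
\[
(\tr|A|)^{mn}+|\det(\tr_1 A)|^m \ge (\cos\alpha)^{mn}\det|A|+(\cos\alpha)^{mn}|\det(\tr_2 A)|^n.
\]

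The step I expect to be the main obstacle is verifying that the numerical range of the partial traces stays inside $S_\alpha$; one must show $W(\tr_1 A)\subseteq S_\alpha$ and $W(\tr_2 A)\subseteq S_\alpha$ whenever $W(A)\subseteq S_\alpha$. The cleanest route is to note $\tr_1 A=\sum_i A_{ii}=\sum_i E_i^* A E_i$ where $E_i:\mathbb{C}^n\to\mathbb{C}^{mn}$ is the inclusion into the $i$-th block, so for any unit vector $x$, $x^*(\tr_1 A)x=\sum_i (E_i x)^*A(E_i x)$ is a nonnegative combination (with total weight $\sum_i\|E_i x\|^2=\|x\|^2=1$) of elements of $W(A)\cup\{0\}$; since $S_\alpha\cup\{0\}$ is a convex cone, the sum lies in $\overline{S_\alpha}$, and positivity of the real part (from $\Re\tr_1 A=\tr_1\Re A>0$) keeps it in $S_\alpha$. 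An analogous compression argument, or the observation $\tr_2 A=[\,\tr A_{ij}\,]$ together with the fact that $\tr(\cdot)$ applied blockwise is a completely positive trace-like map preserving sectors, handles $\tr_2 A$. A minor subtlety to get right is the relation $\Re(\tr_k A)=\tr_k(\Re A)$, which is immediate from real-linearity of the partial trace, and the identity $\det|A|=|\det A|$; once these are in place the rest is bookkeeping with Lemmas \ref{lem34} and \ref{lem35}.
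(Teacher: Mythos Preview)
Your proposal is correct and follows essentially the same route as the paper: apply (\ref{eq111}) to $\Re A$, then sandwich the resulting inequality using Lemma~\ref{lem35} (for $\tr|A|\ge \tr\Re A$ and $|\det(\tr_1 A)|\ge \det\Re(\tr_1 A)$) on the left and Lemma~\ref{lem34} (for $\det\Re A\ge (\cos\alpha)^{mn}|\det A|$ and the analogous bound for $\tr_2 A$) on the right, together with $\Re(\tr_k A)=\tr_k(\Re A)$. Your compression argument for $W(\tr_k A)\subseteq S_\alpha$ is exactly the content the paper imports by citing \cite{Kua17}, so nothing is missing.
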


\begin{proof}
By Lemma \ref{lem35}, we have 
\begin{equation}\label{eq12} 
\tr |A| =\sum_{i=1}^{mn} s_i(A) \ge \sum_{i=1}^{mn} \lambda_i(\Re A) =
\tr (\Re A) \ge 0. 
\end{equation}
Since $W(A)\subseteq S_{\alpha}$, 
it is noteworthy that $W(\tr_1 A) \subseteq S_{\alpha}$ and 
$W(\tr_2 A) \subseteq S_{\alpha}$; see, e.g.,  \cite{Kua17}. 
Observe that $\Re (\tr_1 A) = \tr_1( \Re A)$ and $\Re( \tr_2  A) =  \tr_2( \Re A)$. 
Therefore,  
\begin{align*}
(\tr |A|)^{mn} + |\det (\tr_1 A)|^m 
&\ge (\tr \Re A)^{mn} + \det \bigl( \Re (\tr_1 A) \bigr)^m \\
& = (\tr \Re A)^{mn} + \det \bigl(  \tr_1( \Re A) \bigr)^m \\
& \ge \det (\Re A) + \det \bigl(  \tr_2( \Re A) \bigr)^n \\
&= \det (\Re A) + \det \bigl(  \Re( \tr_2  A) \bigr)^n \\ 
&\ge (\cos \alpha)^{mn} |\det A| + (\cos \alpha)^{mn} |\det (\tr_2 A)^n |, 
\end{align*}
where the first inequality follows from (\ref{eq12}) and Lemma \ref{lem35},  
the second one follows by applying (\ref{eq111}) to $\Re A$,
the last one is by Lemma \ref{lem34}. 
\end{proof}

\section*{Acknowledgments}
The author would like to thank Dr. Minghua Lin  
for bringing the question to his attention and for naming the title of the manuscript, 
which can be regarded as a continuation and development of his result \cite{Lin16}.  
All authors are grateful for valuable comments from the referee, 
which considerably improve the presentation of our manuscript.
This work was supported by  NSFC (Grant Nos. 11671402, 11871479),  
Hunan Provincial Natural Science Foundation (Grant Nos. 2016JJ2138, 2018JJ2479) 
and  Mathematics and Interdisciplinary Sciences Project of CSU.


\begin{thebibliography}{30}

\bibitem{Aud07}
K. M. R. Audenaert, Subadditivity of $q$-entropies for $q>1$, 
J. Math. Phys. 48 (2007), no. 8, 083507. 

\bibitem{Ando14}
T. Ando, Matrix inequalities involving partial traces, 
ILAS Conference, 2014. 

\bibitem{Bh07}
R. Bhatia, 
Positive Definite Matrices, 
Princeton University Press, Princeton, 2007. 

 

\bibitem{Choi19}
D. Choi, T. Y. Tam, P. Zhang, Extension of Fischer's inequality, 
Linear Algebra Appl. 569 (2019) 311--322.



\bibitem{HuangLi20}
Y. Huang, W. Liu, L. Feng, Y. Li, 
A complement of Ando's inequality involving partial traces, arXiv. 


\bibitem{HJ13} 
R. A. Horn, C. R. Johnson,  Matrix Analysis, 
2nd ed., Cambridge University Press, Cambridge, 2013. 

\bibitem{Jiang19}
X. Jiang, Y. Zheng, X. Chen, 
Extending a refinement of Kotelianskii's inequality, 
Linear Algebra Appl. 574 (2019) 252--261. 


\bibitem{Kua17} 
L. Kuai, An extension of the Fiedler-Markham determinant inequality, 
Linear Multilinear Algebra 66 (2018) 547--553. 

\bibitem{Li20}
Y. Li, L. Feng, Z. Huang, W. Liu, 
Inequalities regarding partial trace and partial determinant, 
Math. Ineq. Appl. 23 (2020) 477--485. 

\bibitem{Li20laa}
Y. Li, Y. Huang, L. Feng,  W. Liu, 
Some applications of two completely copositive maps, 
Linear Algebra Appl. 590 (2020) 124--132.

\bibitem{Lin14}
M. Lin, A completely PPT map, Linear Algebra Appl. 459 (2014) 404--410. 

\bibitem{Lin14b}
M. Lin, 
An Oppenheim type inequality for a block Hadamard product, 
Linear Algebra Appl. 452 (2014) 1--6. 

\bibitem{Lin15}
M. Lin, Extension of a result of Haynsworth and Hartfiel, 
Arch. Math. 104 (2015) 93--100. 

\bibitem{Lin16}
M. Lin, A determinantal inequality involving partial traces, 
Canad. Math. Bull. 59 (2016) 585--591. 

\bibitem{Paulsen02}
V. Paulsen, Completely Bounded Maps and Operator Algebras, 
Cambridge University Press, Cambridge, 2002. 

\bibitem{Petz08}
D. Petz, 
Quantum Information Theory and Quantum Statistics. Theoretical and Mathematical Physics, 
Springer, Berlin, 2008.


\bibitem{YLC19}
J. Yang, L. Lu, Z. Chen, 
Schatten $q$-norms and determinantal inequalities for matrices with numerical ranges in a sector,  
Linear Multilinear Algebra 67 (2019) 221--227. 

 \bibitem{Zha12}
F. Zhang, Positivity of matrices with generalized matrix functions. 
Acta Math. Sin. (Engl. Ser.) 28 (2012)   1779--1786. 

\bibitem{Zhang15}
P. Zhang, Extension of Matic's results, 
Linear Algebra Appl. 486 (2015) 328--334. 

\bibitem{Zhang11}
F. Zhang, 
Matrix Theory: Basic Results and Techniques, 2nd ed., 
Springer, New York, 2011.

\end{thebibliography}
\end{document}